\tikzstyle{every node}=[font=\small]
\newcommand{\Z}{\mathbb{Z}}
\newcommand{\lrc}[1]{\left<#1\right>}
\newcounter{statement}[section]
\renewcommand\thestatement{\thesection.\arabic{statement}}
\newenvironment{definition}[1][]{\refstepcounter{statement}\vspace{12pt}\noindent\textbf{Definition~\thestatement.}\label{def:#1}\rmfamily}{}
\newenvironment{proposition}[1][]{\refstepcounter{statement}\vspace{12pt}\noindent\textbf{Proposition~\thestatement.}\label{prp:#1}\rmfamily\begin{itshape}}{\end{itshape}}
\newenvironment{theorem}[1][]{\refstepcounter{statement}\vspace{12pt}\noindent\textbf{Theorem~\thestatement.}\label{thm:#1}\rmfamily\begin{itshape}}{\end{itshape}}
\newenvironment{corollary}[1][]{\refstepcounter{statement}\vspace{12pt}\noindent\textbf{Corollary~\thestatement.}\label{cor:#1}\rmfamily\begin{itshape}}{\end{itshape}}
\newenvironment{example}[1][]{\refstepcounter{statement}\vspace{12pt}\noindent\textbf{Example~\thestatement.}\label{ex:#1}\rmfamily}{}
\newenvironment{proof}{\vspace{12pt}\noindent\textbf{Proof.}}{\hfill$\square$}
\let\@fnsymbol\@arabic
\begin{document}

\title{Introducing $n$-Magic Groups and Characterizing $3$-Magic Finitely Generated Abelian Groups}

\author{Danielle Bowerman\thanks{dcbhz9@umsystem.edu --- supported by the Chancellor's Distinguished Fellows Program}, Nicholas Fleece\thanks{nfg4h@umsystem.edu --- supported by the Kummer Institute for Student Success, Research and Economic Development},  Matt Insall\thanks{insall@mst.edu}}
\affil{Missouri University of Science and Technology}

\maketitle

\begin{abstract}
    In this paper, we define an $n$-magic square in a group to be an $(n\times n)$ array of group elements whose rows, columns, and diagonals have the same product. This definition is akin to the idea of magic squares in the integers. Groups that have an $n$-magic square are said to be $n$-magic. We begin with some preliminary results and focus much of our attention on $3$-magic groups. Through a series of propositions, we ultimately prove a characterization theorem for $3$-magic finitely generated abelian groups. We then discuss some additional results about non-abelian groups as well as $n$-magic groups where $n>3$.
\end{abstract}

\section{Introduction}

In this section, we review previous work on magic squares, provide fundamental definitions, and present the manner in which this work is organized.

\subsection{Previous Work}

Magic squares are one of the most well-known topics in recreational mathematics. The notion initially referred to a square array of distinct positive integers in which every row, column, and diagonal sum to the same number. It is well known that magic squares exist for every size larger than a $(2\times2)$ array. In fact, many examples of these were found in antiquity.

This paper originates in our work on the problems posed in the Numberphile video ``The Parker Square---Numberphile'' \cite{parker}. While searching for a magic square of squares, we began considering squares in settings other than the integers. In \cite{cain}  magic squares in finite fields were discussed. The work in this paper began with the goal of defining magic squares in the realm of group theory, at which point we learned that \cite{sun} had initiated an investigation of this idea. The definition we created independently of their work is very similar. However, here, we take the idea further and characterize the finitely generated abelian groups with $(3\times3)$ magic squares.

The previously mentioned 1997 paper by Sun and Yihui \cite{sun} touches on this topic and begins to address the topics on which our work is founded. Lower bounds for the orders of the groups that admit magic squares were established in that paper, but the authors also provide results that determine several classes of groups which have $(n\times n)$ magic squares, leaving open the question of classifications of groups which do not have $(n\times n)$ magic squares. It was proved in the paper that for any positive integer $n \geq 3$, any abelian group of order $n^2$ admits an $(n \times n)$ magic square. They also showed that for any positive prime number $p$ and any positive integer $n$, any elementary abelian $p-$group of size $p^{2n}$ admits a magic square of size $(2n \times 2n)$.

\subsection{Definitions and Notation}

We begin this section with the fundamental definition of this paper.

\begin{definition}[magicgroup]
We say that group $G$ is $n$-\textbf{magic} if it has an $n\times n$ magic square. That is, there exist distinct
$g_{1,1},g_{1,2},\ldots,g_{1,n},g_{2,1},g_{2,2},\ldots,g_{2,n},\ldots,g_{n,1},g_{n,2},\ldots,g_{n,n}\in G$ such that
\begin{align}
g_{1,1}g_{1,2}\ldots g_{1,n}&=g_{2,1}g_{2,2}\ldots g_{2,n}\nonumber \\
&=\ldots\nonumber \\
&=g_{n,1}g_{n,2}\ldots g_{n,n}\nonumber \\
&=g_{1,1}g_{2,1}\ldots g_{n,1}\nonumber \\
&=g_{1,2}g_{2,2}\ldots g_{n,2}\nonumber \\
&=\ldots\nonumber \\
&=g_{1,n}g_{2,n}\ldots g_{n,n}\nonumber \\
&=g_{1,1}g_{2,2}\ldots g_{n,n}\nonumber \\
&=g_{n,1}g_{n-1,2}\ldots g_{1,n}.\nonumber
\end{align}
We call this common product the \textbf{magic product} of the magic square. %If $g\in G$ is the magic product of some $n$-magic square in $G$, we call it an $n$\textbf{-magic product in} $G$.
\end{definition}

Note that in \cite{sun} a ``magic group'' refers to the automorphism group on a magic square. This greatly differs from our notion of an $n$-magic group. It is also worth noting that natural variants of ``magic'' objects can be defined similarly, such as for semi-groups or for magmas.  We leave those for a future investigation.  

\subsection{Organization of This Work}

In the remainder of this paper, we first provide some preliminary results on $3$-magic groups. We then prove a series of propositions that act as lemmas for the proof of the characterization of $3$-magic finitely generated abelian groups, which is then presented. Finally, we provide some results related to nonabelian groups and discuss some ideas for future work on this topic.

\section{Results}

The following two results for a group $G$ are immediate. The first provides a lower bound on the order of a group for it to be $n$-magic. The second tells us that if a group is $n$-magic, then any group containing it as a subgroup is also $n$-magic.

\begin{proposition}[minimumorder]
If a group $G$ is $n$-magic, then $|G|\geq n^2$.
\end{proposition}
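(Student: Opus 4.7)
The plan is to read the statement off directly from the definition of an $n$-magic group and observe that the required distinctness condition forces the group to contain at least $n^2$ elements. No structural facts about groups beyond cardinality are needed.

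First, I would unpack Definition~\ref{def:magicgroup}: if $G$ is $n$-magic, then by definition there exist elements $g_{i,j} \in G$ for $1 \leq i,j \leq n$ which are \emph{distinct} and which satisfy the common-product conditions. The equality conditions on rows, columns, and diagonals are irrelevant here; only the distinctness matters.

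Next, I would simply count. The indexing set $\{(i,j) : 1 \leq i,j \leq n\}$ has exactly $n^2$ elements, so the collection $\{g_{i,j}\}$ is a set (not merely a multiset) of $n^2$ distinct elements of $G$. Therefore the underlying set of $G$ contains at least $n^2$ elements, which gives $|G| \geq n^2$.

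There is no real obstacle to this argument; the only subtlety to flag for the reader is that the definition requires the entries to be pairwise distinct, so the conclusion would fail if one allowed repeated entries (as is sometimes done in weaker notions of ``magic square''). In that sense the proposition is more a sanity check on the definition than a substantive result, and I would expect the written proof to be just one or two sentences.
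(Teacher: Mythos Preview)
Your argument is correct and matches the paper's treatment: the paper declares this result ``immediate'' and gives no separate proof, precisely because it follows directly from the distinctness requirement in Definition~\ref{def:magicgroup}, exactly as you observe.
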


\begin{proposition}[subgroup]
If $H\leq G$ and $H$ is $n$-magic, then $G$ is $n$-magic.
\end{proposition}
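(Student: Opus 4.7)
The plan is to simply transport the magic square witnessing $H$'s $n$-magicness directly into $G$. Since $H$ is $n$-magic, by Definition~\ref{def:magicgroup} there exist distinct elements $g_{i,j} \in H$ for $1 \leq i,j \leq n$ whose row, column, and diagonal products in $H$ all coincide with some common magic product $m \in H$.

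Next, I would invoke the hypothesis $H \leq G$. This means that every $g_{i,j}$ is simultaneously an element of $G$, and the group operation on $H$ is by definition the restriction of the group operation on $G$ to $H$. Consequently, every product computed among the $g_{i,j}$ yields the same result whether interpreted inside $H$ or inside $G$; in particular, all the row, column, and diagonal products in $G$ again equal $m$. Distinctness of the $n^2$ elements is also preserved since it is a statement about the underlying set, which is unchanged by changing the ambient group.

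The proof therefore reduces to observing that the identical array $(g_{i,j})$ satisfies the defining conditions of an $n \times n$ magic square in $G$, so $G$ is $n$-magic. There is no real obstacle here; the only thing to be careful about is explicitly noting that subgroup inclusion preserves both the group operation (needed for equality of products) and the underlying element identities (needed for distinctness), which is what makes the transfer go through.
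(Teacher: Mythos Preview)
Your argument is correct and is exactly the immediate observation the paper has in mind; the paper does not even write out a proof, simply declaring the result ``immediate,'' and your transport of the magic square from $H$ to $G$ via the subgroup inclusion is the natural justification.
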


We can quickly see the following.

\begin{theorem}[2-magic]
No groups are 2-magic.
\end{theorem}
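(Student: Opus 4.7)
The plan is to observe that the magic-square equations for $n=2$ are already overdetermined enough to force two entries to coincide, violating the distinctness requirement in the definition.

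Concretely, suppose for contradiction that $G$ has a $2\times 2$ magic square with entries $g_{1,1}, g_{1,2}, g_{2,1}, g_{2,2}$, pairwise distinct, whose rows, columns, and diagonals all share a common product. In particular, the first-row product equals the first-column product, so
\[
g_{1,1}\,g_{1,2} \;=\; g_{1,1}\,g_{2,1}.
\]
Left-cancelling $g_{1,1}$ (which is legal in any group) yields $g_{1,2} = g_{2,1}$, contradicting the assumption that the four entries are distinct.

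That is essentially the whole argument; there is no real obstacle to overcome, since the case $n=2$ is so small that no nontrivial structure theory is needed. The only subtlety worth flagging is that the distinctness hypothesis in Definition~\ref{def:magicgroup} is doing all of the work — without it, every group trivially has a ``2-magic square'' by repeating entries (e.g.\ taking every entry equal to the identity). I would state this proof in one short paragraph and note in passing that the same cancellation argument shows more generally that any putative $2\times 2$ magic square in a group must have $g_{1,2}=g_{2,1}$ and, by the analogous comparison of the second row with the second column, also $g_{1,1}=g_{2,2}$, so the only ``2-magic'' squares over a group are the degenerate ones with at most two distinct entries.
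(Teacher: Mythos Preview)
Your proof is correct and follows essentially the same approach as the paper: compare the first-row product with the first-column product, left-cancel $g_{1,1}$, and derive $g_{1,2}=g_{2,1}$, contradicting distinctness. The additional remarks you make about $g_{1,1}=g_{2,2}$ and the role of the distinctness hypothesis are fine optional commentary but not part of the paper's argument.
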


\begin{proof}
Suppose $G$ is 2-magic, and let $\begin{bmatrix} g_1 & g_2 \\ g_3 & g_4
\end{bmatrix}$ be a magic square. Then $g_1g_2=g_1g_3$, and through left multiplication by $g_{1}^{-1}$, we have $g_2=g_3$. This contradicts the assumption that the entries in a magic square are distinct.
\end{proof}

\begin{corollary}[cancellation]
    Any monobinary algebra with the cancellation property will not be 2-magic.
\end{corollary}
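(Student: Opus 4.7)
The plan is to observe that the proof of Theorem (2-magic) never genuinely required inverses or associativity --- only left cancellation --- and to repackage it in the weaker setting of a monobinary algebra with the cancellation property.

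More precisely, suppose for contradiction that $\mathcal{A}$ is a monobinary algebra (a set equipped with a single binary operation, which we denote by juxtaposition) satisfying the cancellation property, and that $\mathcal{A}$ is $2$-magic. Then by the definition of $n$-magic (adapted verbatim to this setting, since the definition only references a binary operation and the notion of distinct entries), there exist distinct elements $g_1, g_2, g_3, g_4 \in \mathcal{A}$ with
\[
g_1 g_2 \;=\; g_1 g_3,
\]
namely the equality between the first row product and the first column product. Left cancellation by $g_1$ then forces $g_2 = g_3$, contradicting distinctness.

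The only step worth checking is the first one: that the definition of $n$-magic genuinely transfers from groups to monobinary algebras. Since the definition (\ref{def:magicgroup}) only speaks of an array of distinct elements whose various row, column, and diagonal products agree, no group axioms beyond the existence of a binary operation are used, and the statement of the corollary is well-posed. After that, the argument is immediate from cancellation alone, and no obstacle arises; indeed this is why the result is stated as a corollary rather than an independent theorem.
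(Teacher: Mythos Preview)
Your proposal is correct and matches the paper's approach: the corollary is stated without proof precisely because the argument for Theorem~\ref{thm:2-magic} uses nothing beyond left cancellation of $g_1$ in the equation $g_1g_2=g_1g_3$, and you have made that observation explicit.
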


Given this, we can begin investigating which groups are $3$-magic. From Proposition~\ref{prp:minimumorder}, we have that all groups of order $8$ or less cannot be $3$-magic. We wish to find ``fundamental'' classes of magic groups for which we can then apply Proposition~\ref{prp:subgroup} to find a much larger class of groups. To this end, we endeavour to characterize 3-magic finitely generated abelian groups. The next four propositions are our first steps in this direction.

\begin{proposition}[infinitegroup]
    All infinite, finitely generated abelian groups are $3$-magic.
\end{proposition}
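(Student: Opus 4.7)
The plan is to reduce everything to showing that $\mathbb{Z}$ itself is $3$-magic, and then invoke Proposition~\ref{prp:subgroup}. By the structure theorem for finitely generated abelian groups, any infinite finitely generated abelian group $G$ is isomorphic to $\mathbb{Z}^{r}\oplus T$ for some integer $r\geq 1$ and some finite abelian group $T$; in particular, $G$ contains an isomorphic copy of $\mathbb{Z}$ as a subgroup (for instance, the cyclic subgroup generated by any element of infinite order). Thus, once we know $\mathbb{Z}$ is $3$-magic, Proposition~\ref{prp:subgroup} immediately upgrades this to all groups of the stated form.

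The remaining step is therefore to exhibit an explicit $3$-magic square in $\mathbb{Z}$. Here I would simply invoke the classical Lo Shu square
\[
\begin{bmatrix} 2 & 7 & 6 \\ 9 & 5 & 1 \\ 4 & 3 & 8 \end{bmatrix},
\]
whose nine entries are distinct integers and whose rows, columns, and both diagonals each sum to $15$. Since the additive group of $\mathbb{Z}$ is precisely the setting of classical integer magic squares, this array is a $3$-magic square in the sense of Definition~\ref{def:magicgroup} with magic product $15$.

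There is no serious obstacle here: the structure theorem does the heavy lifting in one line, and the existence of a Lo Shu–style $3\times 3$ magic square of distinct integers is a well-known fact from recreational mathematics. The only thing worth being explicit about is that Definition~\ref{def:magicgroup} requires distinctness of the nine entries (not just equal line products), which the Lo Shu array satisfies. With those two ingredients in place, the proof is essentially a one-line appeal to Proposition~\ref{prp:subgroup}.
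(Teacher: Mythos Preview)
Your proposal is correct and mirrors the paper's own proof almost exactly: the paper exhibits a Lo Shu--type square in $\Z$ (a rotated variant of yours) and then applies the Fundamental Theorem of Finitely Generated Abelian Groups together with Proposition~\ref{prp:subgroup}. The only cosmetic difference is the particular orientation of the $3\times3$ square displayed.
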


\begin{proof}
    We have that $\Z$ is $3$-magic since
    \begin{gather*}
        \begin{bmatrix}
            8 & 1 & 6  \\
            3 & 5 & 7  \\
            4 & 9 & 2
        \end{bmatrix}
    \end{gather*}
    is a magic square. If $G$ is an infinite finitely generated abelian group then $\Z\leq G$ by the Fundamental Theorem of Finitely Generated Abelian Groups and so $G$ is $3$-magic by Proposition~\ref{prp:subgroup}.
\end{proof}

\begin{proposition}[cyclicgroups]
The cyclic group 
$C_n$ is $3$-magic if and only if $n\geq9$.
\end{proposition}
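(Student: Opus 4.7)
The plan is to prove the two directions separately, with the forward direction being an immediate consequence of an earlier result and the reverse direction proceeding by explicit construction.

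For the forward direction, suppose $C_n$ is $3$-magic. Then Proposition~\ref{prp:minimumorder} with $n=3$ gives $|C_n| = n \geq 9$ immediately, so there is nothing more to do.

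For the reverse direction, the idea is to recycle the classical Lo Shu magic square already used in the proof of Proposition~\ref{prp:infinitegroup}, interpreted now in $C_n$ written additively as $\Z/n\Z$. Concretely, for $n \geq 9$, I would exhibit the array
\begin{gather*}
\begin{bmatrix}
8 & 1 & 6 \\
3 & 5 & 7 \\
4 & 9 & 2
\end{bmatrix}
\end{gather*}
with entries understood modulo $n$. Two things must be checked. First, the nine entries $1,2,\ldots,9$ are distinct as elements of $C_n$: when $n \geq 10$ this is obvious, and when $n = 9$ the entries $\{1,2,\ldots,8,9\} = \{1,2,\ldots,8,0\}$ are still pairwise distinct, being all of $C_9$. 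Second, each row, column, and diagonal sums to $15$ in $\Z$, so all eight of these sums reduce to the same element $15 \bmod n$ in $C_n$. Thus the array is a legitimate $3$-magic square in $C_n$.

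There is no real obstacle here; the argument is essentially ``the Lo Shu square survives reduction mod $n$ as long as $n$ is large enough to keep the entries distinct.'' The only subtle point is confirming distinctness in the boundary case $n = 9$, where the entry $9$ collapses to $0$ but does not collide with any other entry. Once both directions are in place, combining Proposition~\ref{prp:minimumorder} with this explicit construction gives the equivalence.
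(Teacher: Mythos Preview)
Your proposal is correct and follows essentially the same approach as the paper: invoke Proposition~\ref{prp:minimumorder} for necessity, and for sufficiency exhibit an explicit Lo Shu--type square in $C_n$ whose nine entries remain distinct once $n\geq 9$. The only cosmetic difference is that the paper shifts the square so the center is the identity (writing it multiplicatively with exponents $\{0,\pm1,\pm2,\pm3,\pm4\}$ and magic product $1$), whereas you use the raw Lo Shu entries $1,\ldots,9$ with magic sum $15$; these are the same construction up to translation.
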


\begin{proof}
Necessity follows immediately from Proposition~\ref{prp:minimumorder}. For sufficiency, we have that, for $C_n=\lrc{x}$,
\begin{gather*}
    \begin{bmatrix}
        x^{n-3} & x^2       & x         \\
        x^4     & 1         & x^{n-4}   \\
        x^{n-1} & x^{n-2}   & x^3
    \end{bmatrix}
\end{gather*}
forms a magic square with a magic product of $1$. The fact that $n\geq9$ guarantees these entries are distinct.
\end{proof}

\begin{proposition}[cyclicgroups23]
The groups 
$C_{n}^{2}$ and $C_{n}^{3}$ are $3$-magic if and only if $n\geq3$.
\end{proposition}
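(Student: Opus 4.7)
The plan is to handle necessity for both groups uniformly via Proposition~\ref{prp:minimumorder}, to prove sufficiency by constructing an explicit magic square in $C_n^2$ for every $n\geq 3$, and then to deduce the $C_n^3$ case from the $C_n^2$ case via Proposition~\ref{prp:subgroup}.

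For necessity, I would simply note that $|C_n^2|=n^2$ and $|C_n^3|=n^3$, so Proposition~\ref{prp:minimumorder} forces $n^2\geq 9$ and $n^3\geq 9$ respectively; both inequalities reduce to $n\geq 3$ (the cubic case using $2^3=8<9$).

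For sufficiency in $C_n^2=\lrc{x}\times\lrc{y}$, the core idea is to build a magic square whose entries are the nine elements $x^iy^j$ with $i,j\in\{-1,0,1\}$, since these are automatically distinct as soon as $n\geq 3$. Working additively, the standard parameterization of a $3\times 3$ magic square in an abelian group with center $c$ and parameters $a,b$ is
\[
\begin{bmatrix} c+a & c-a-b & c+b \\ c+b-a & c & c+a-b \\ c-b & c+a+b & c-a \end{bmatrix},
\]
in which every row, column, and diagonal sums to $3c$. Specializing (multiplicatively) to $c=1$, $a=x$, $b=y$ yields a magic square with magic product $1$ whose entries are precisely the nine elements described above. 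Direct inspection confirms both the magic equalities and, since $-1,0,1$ are pairwise distinct modulo $n$ for $n\geq 3$, the distinctness of the entries.

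For $C_n^3$ the argument is then immediate: the map $(g,h)\mapsto(g,h,1)$ embeds $C_n^2$ as a subgroup of $C_n^3$, so Proposition~\ref{prp:subgroup} transfers the $3$-magic property at once. The main conceptual obstacle is identifying a construction that works uniformly for all $n\geq 3$ rather than one tailored to a specific modulus --- a naive attempt to reduce the classical Lo Shu square entrywise modulo $n$ fails for small $n$ because its entries collapse. Once the parameterization with $a=x$, $b=y$ is in hand, everything else is a routine verification.
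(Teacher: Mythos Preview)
Your proof is correct but follows a different, more uniform route than the paper's. For sufficiency in $C_n^2$, the paper proceeds by cases: for $n\geq 9$ it invokes $C_n\leq C_n^2$ together with Proposition~\ref{prp:cyclicgroups}; for odd $n$ it exhibits an explicit square in $C_{2k+1}^2$ (covering $n=3,5,7$); for $n=4$ it gives a separate explicit square; and for $n\in\{6,8\}$ it reduces to $C_3^2$ and $C_4^2$ via Proposition~\ref{prp:subgroup}. You instead give a single construction valid for every $n\geq 3$, taking $a=x$ and $b=y$ in the standard two-parameter form of a centered abelian $3$-magic square, so that the nine entries are exactly $\{x^iy^j:i,j\in\{-1,0,1\}\}$. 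This is essentially the parameterization that the paper only derives later as Corollary~\ref{cor:abelianparameters}, but since you verify the magic and distinctness conditions directly there is no circularity. Your approach is cleaner and avoids case-splitting; the paper's approach has the minor virtue of keeping this proposition logically independent of the parameterization result and of showcasing the subgroup-reduction technique that recurs throughout the rest of the argument.
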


\begin{proof}
Necessity follows immediately from Proposition~\ref{prp:minimumorder}. For sufficiency, given $C_{n}^{2}$, we must cover a few cases. First, note that $C_n\leq C_{n}^{2}$ for all $n$ so that, by Proposition~\ref{prp:subgroup}, $C_{n}^{2}$ is $3$-magic for all $n\geq9$. Next, for $C_{2k+1}^{2}=\lrc{x,y}$,
\begin{gather*}
    \begin{bmatrix}
        x^k     & y^{k+1}   & x^{k+1}y^k        \\
        xy^k    & 1         & x^{2k}y^{k+1}   \\
        x^ky    & y^k       & x^{k+1}
    \end{bmatrix}
\end{gather*}
forms a magic square with a common product of $1$. Thus, $C_{3}^{2}$, $C_{5}^{2}$, and $C_{7}^{2}$ are $3$-magic. Furthermore, since $C_{3}^{2}\leq C_{6}^{2}$, by Proposition~\ref{prp:subgroup}, $C_{6}^{2}$ is also $3$-magic. Lastly, for $C_{4}^{2}=\lrc{x,y}$,
\begin{gather*}
    \begin{bmatrix}
        x       & y^3   & x^3y      \\
        x^2y    & 1     & x^2y^3    \\
        xy^3    & y     & x^3
    \end{bmatrix}
\end{gather*}
forms a magic square with a common product of $1$. Thus, $C_{4}^{2}$ is $3$-magic. Since $C_{4}^{2}\leq C_{8}^{2}$, by Proposition~\ref{prp:subgroup}, $C_{8}^{2}$ is also $3$-magic.

Finally, since $C_{n}^{2}\leq C_{n}^{3}$ for all $n$, Proposition~\ref{prp:subgroup} provides sufficiency for $C_{n}^{3}$.
\end{proof}

\begin{proposition}[cyclicgroupsnsquaredn]
The group $C_{n}\times C_{n^2}$ is $3$-magic if and only if $n\geq3$.
\end{proposition}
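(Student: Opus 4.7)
The plan is that this proposition follows almost immediately from the earlier results, with no new construction needed.

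For the necessity direction, I would observe that if $n \leq 2$, then $|C_n \times C_{n^2}| = n^3 \leq 8 < 9$, so by Proposition~\ref{prp:minimumorder} the group cannot be $3$-magic. This handles $n = 1$ (trivial group) and $n = 2$ (the group of order $8$) simultaneously.

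For the sufficiency direction, the key observation is that $C_{n^2}$ embeds as a subgroup of $C_n \times C_{n^2}$ (via the second factor). For $n \geq 3$ we have $n^2 \geq 9$, so Proposition~\ref{prp:cyclicgroups} yields that $C_{n^2}$ is $3$-magic. Then Proposition~\ref{prp:subgroup} immediately lifts this to $C_n \times C_{n^2}$.

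There really is no main obstacle here: the work was already done in establishing Propositions~\ref{prp:minimumorder}, \ref{prp:subgroup}, and \ref{prp:cyclicgroups}, and this result is the clean corollary obtained by combining them. The proof should be only a few lines.
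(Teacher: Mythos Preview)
Your proposal is correct and matches the paper's proof essentially line for line: necessity via Proposition~\ref{prp:minimumorder} (since $|C_n\times C_{n^2}|=n^3<9$ for $n\le 2$) and sufficiency via $C_{n^2}\le C_n\times C_{n^2}$ together with Propositions~\ref{prp:cyclicgroups} and~\ref{prp:subgroup}. There is nothing to add.
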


\begin{proof}
Necessity again follows from Proposition~\ref{prp:minimumorder}. Sufficiency follows from Proposition~\ref{prp:subgroup}, after recognizing that $C_{n^2}\leq C_{n}\times C_{n^2}$ .
\end{proof}

To proceed further, we need to define a ``normalized'' magic square.

\begin{definition}[NormalizedMagicSquare]
A \textbf{normalized $n$-magic square} in a group $G$ is an $n$-magic square in $G$ such that one of the entries is the identity element of $G$. When $n$ is odd, if the center entry of the magic square is the identity, we call it a \textbf{centered normalized $n$-magic square}.
\end{definition}

It can be shown that in abelian groups we can convert any magic square to a normalized magic square with the identity in any position that we choose.

\begin{proposition}[normalizing]
    An abelian group is $n$-magic if and only if for all $i,j\in\{1,2,\ldots,n\}$ there exists an $n$-magic square in $G$ whose $g_{i,j}$ entry is the identity. Therefore, an abelian group is $n$-magic if and only if it admits a normalized magic square.
\end{proposition}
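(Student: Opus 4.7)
The forward direction of the first biconditional follows by picking any $i,j$ and taking the guaranteed square: its existence witnesses that $G$ is $n$-magic. So the entire content is in the converse.

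For the converse, my plan is to take an arbitrary $n$-magic square $[g_{k,\ell}]$ in $G$ with magic product $P$, fix the target position $(i,j)$, let $a = g_{i,j}$, and form the new array $h_{k,\ell} := a^{-1}g_{k,\ell}$ for all $k,\ell$. I would then verify three things: (a) every row, column, and diagonal of $[h_{k,\ell}]$ still shares a common product, (b) the entries remain distinct, and (c) $h_{i,j} = e$.

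For (a), this is where commutativity does all the work. In an abelian group, any row product becomes
\begin{equation*}
h_{k,1}h_{k,2}\cdots h_{k,n} = (a^{-1})^n g_{k,1}g_{k,2}\cdots g_{k,n} = a^{-n}P,
\end{equation*}
independent of $k$, and the same computation applies verbatim to each column and to each diagonal since each line contains exactly $n$ entries. Thus $[h_{k,\ell}]$ is magic with magic product $a^{-n}P$. For (b), left multiplication by $a^{-1}$ is a bijection on $G$, so distinctness of the $g_{k,\ell}$ transfers to distinctness of the $h_{k,\ell}$. For (c), $h_{i,j} = a^{-1}a = e$ by construction. This produces the required square with identity at $(i,j)$.

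The final sentence of the proposition is then an immediate consequence: if $G$ admits any normalized magic square, $G$ is $n$-magic; conversely, applying the first biconditional with, say, $(i,j)=(1,1)$ produces a normalized magic square. The only genuinely nontrivial ingredient is the use of commutativity to pull the $n$ copies of $a^{-1}$ across each line, and I expect no obstacle beyond that observation; the argument is essentially a one-line translation of the square by $a^{-1}$.
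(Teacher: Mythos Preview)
Your proof is correct and follows essentially the same approach as the paper: translate the entire square by $g_{i,j}^{-1}$, use commutativity to see that every line now has common product $g_{i,j}^{-n}P$, and invoke the bijectivity of left multiplication to preserve distinctness. (A minor point of presentation: you have the labels ``forward'' and ``converse'' swapped relative to the statement as written---the implication you call the converse is in fact the forward one---but the mathematical content is unaffected.)
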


\begin{proof}
    Sufficiency is clear. To prove necessity, choose an $n$-magic square in $G$ with magic product $s$. Let $i,j\in\{1,2,\ldots,n\}$ and multiply each entry by $g_{i,j}^{-1}$. Then the resulting square has $1$ in the $i,j$ position, and each row, column, and diagonal has a product of $g_{i,j}^{-n}s$. We can guarantee that all these entries are unique since left multiplication by a group element is an automorphism.
\end{proof}

The next three results answer the remaining question as to whether we can find an even stronger biconditional that can be used to quickly show that an abelian group is not $3$-magic.

\begin{theorem}[abelianparameters]
    The elements of any $3$-magic square in an abelian group can be generated by three elements.
\end{theorem}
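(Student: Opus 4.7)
The plan is to switch to additive notation (permissible since $G$ is abelian) and exploit the fact that the eight magic-square conditions form a highly constrained linear system in the nine entries $g_{i,j}$ and the magic sum $s$. The approach is first to reduce the number of free parameters using these equations, and then to exhibit every entry as a $\Z$-linear combination of just three of them.

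The key identity I would aim for first is $s = 3 g_{2,2}$. To see this, I would sum the four lines that pass through the center—the middle row, the middle column, the main diagonal, and the anti-diagonal—each of which equals $s$. This yields $4s$ on one side and, on the other, the total of all nine entries with $g_{2,2}$ counted four times. Since adding the three row equations shows that all nine entries sum to $3s$, I obtain $4s = 3s + 3 g_{2,2}$, whence $s = 3 g_{2,2}$.

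Next, I would set $a = g_{1,1}$, $b = g_{1,2}$, and $c = g_{2,2}$, and resolve the remaining entries one at a time. Row~1 forces $g_{1,3} = 3c - a - b$; the main diagonal yields $g_{3,3} = 2c - a$; the anti-diagonal yields $g_{3,1} = a + b - c$; column~2 yields $g_{3,2} = 2c - b$; and the remaining entries $g_{2,1}$ and $g_{2,3}$ then follow from row~2 and column~3. Every entry is thereby exhibited as an integer combination of $a, b, c$, so all nine entries lie inside $\lrc{a, b, c} \leq G$, and the subgroup they generate is therefore generated by three elements.

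The main obstacle is the initial identity $s = 3 g_{2,2}$; once that is in hand, everything else is routine substitution into the linear system. Beyond keeping the bookkeeping straight, I would take care to verify that the six derived expressions are consistent with all eight original equations, not just the ones used in the derivation. A brief remark worth recording is that after normalizing via Proposition~\ref{prp:normalizing} so that $c = 0$, the same calculation shows the entries lie in $\lrc{a, b}$, which points toward the stronger sharpening that a centered normalized $3$-magic square over an abelian group requires only two generators.
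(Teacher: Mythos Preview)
Your proof is correct and follows essentially the same route as the paper: establish $s = 3g_{2,2}$ by summing the four lines through the center and comparing with the grand total $3s$, then back-substitute through the linear system to express every entry in three parameters. The only cosmetic difference is the choice of generators---the paper takes $a = g_{1,1}g_{2,2}^{-1}$, $b = g_{1,3}g_{2,2}^{-1}$, $c = g_{2,2}$, which yields the symmetric parametrization exploited in the subsequent corollary, whereas you take $g_{1,1}$, $g_{1,2}$, $g_{2,2}$ directly.
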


\begin{proof}
Sufficiency is immediate. The proof for necessity is adapted from \cite{mathpages} and translated into the language of groups. If $G$ is $3$-magic, let
\begin{gather*}
    \begin{bmatrix}
        g_1     & g_2  & g_3  \\
        g_4     & g_5  & g_6  \\
        g_7     & g_8  & g_9
    \end{bmatrix}
\end{gather*} 
be a magic square in $G$ with magic product $s$. We then have
\begin{align*}
    g_1g_5g_9 &= s \\
    g_2g_5g_8 &= s \\
    g_4g_5g_6 &= s \\
    g_7g_5g_3 &= s \\
    g_1g_4g_7 &= s \\
    g_3g_6g_9 &= s \\
    g_1g_2g_3 &= s \\
    g_7g_8g_9 &= s.
\end{align*}
Multiplying the first four equations gives us
\begin{align*}
    (g_1g_2g_3)(g_4g_5g_6)(g_7g_8g_9)g_{5}^{3} &= s^4 \\
    s^3g_{5}^{3} &= s^4 \\
    g_{5}^{3} &= s
\end{align*}
Then, define $a:=g_{1}g_{5}^{-1}$, $b:=g_{3}g_{5}^{-1}$, and $c:=g_{5}$. Now note that since $g_1g_2g_3=g_{5}^{3}$ we know
\begin{gather*}
    g_2=g_{1}^{-1}g_{3}^{-1}g_{5}^{3},
\end{gather*}
since $g_7g_5g_3=g_{5}^{3}$ we have 
\begin{gather*}
    g_7=g_{3}^{-1}g_{5}^{2},
\end{gather*}
and since $g_1g_5g_9=g_{5}^{3}$ we can state that
\begin{gather*}
    g_9=g_{1}^{-1}g_{5}^{2}.
\end{gather*}
Combining these statements allows us to say that since $g_1g_4g_7=g_{5}^{3}$ we know
\begin{gather*}
    g_4=g_{1}^{-1}g_{5}^{3}g_{7}^{-1}=g_{1}^{-1}g_{5}^{3}(g_{3}^{-1}g_{5}^{2})^{-1}=g_{1}^{-1}g_{3}^{-1}g_{5},
\end{gather*}
since $g_2g_5g_8=g_{5}^{3}$ we have
\begin{gather*}
    g_8=g_{2}^{-1}g_{5}^{2}=(g_{1}^{-1}g_{3}^{-1}g_{5}^{3})^{-1}g_{5}^{2}=g_{1}g_{3}g_{5}^{-1},
\end{gather*}
and since $g_3g_6g_9=g_{5}^{3}$ we have
\begin{gather*}
    g_6=g_{3}^{-1}g_{5}^{3}g_{9}^{-1}=g_{3}^{-1}g_{5}^{3}(g_{1}^{-1}g_{5}^{2})^{-1}=g_{1}g_{3}^{-1}g_{5}.
\end{gather*}
Combining these lets us see that
\begin{align*}
    \begin{bmatrix}
        ac     & a^{-1}b^{-1}c   & bc  \\
        a^{-1}bc     & c    & ab^{-1}c  \\
        b^{-1}c    & abc    & a^{-1}c
    \end{bmatrix} &= 
    \begin{bmatrix}
        g_1g_{5}^{-1}g_{5}    & (g_{1}g_{5}^{-1})^{-1}(g_{3}g_{5}^{-1})^{-1}g_{5}    & g_{3}g_{5}^{-1}g_{5}  \\
        (g_{1}g_{5}^{-1})^{-1}g_{3}g_{5}^{-1}g_{5}     & g_{5}    & g_{1}g_{5}^{-1}(g_{3}g_{5}^{-1})^{-1}g_{5}  \\
        (g_{3}g_{5}^{-1})^{-1}g_{5}    & g_{1}g_{5}^{-1}g_{3}g_{5}^{-1}g_{5}    & (g_{1}g_{5}^{-1})^{-1}g_{5}
    \end{bmatrix} \\
    &= \begin{bmatrix}
        g_{1}    & g_{1}^{-1}g_{3}^{-1}g_{5}^{3}    & g_{3}  \\
        g_{1}^{-1}g_{3}g_{5}     & g_{5}    & g_{1}g_{3}^{-1}g_{5}  \\
        g_{3}^{-1}g_{5}^{2}    & g_{1}g_{3}g_{5}^{-1}    & g_{1}^{-1}g_{5}^{2}
    \end{bmatrix} \\
    &= \begin{bmatrix}
        g_1     & g_2  & g_3  \\
        g_4     & g_5  & g_6  \\
        g_7     & g_8  & g_9
    \end{bmatrix}
\end{align*}
\end{proof}

This leads to the biconditional result that will be used throughout the remainder of the paper.

\begin{corollary}[abelianparameters]
An abelian group $G$ is $3$-magic if and only if there is a centered normalized $3$-magic square in $G$ of the form
\begin{gather*}
    \begin{bmatrix}
        a       & a^{-1}b^{-1}  & b  \\
        a^{-1}b & 1             & ab^{-1}  \\
        b^{-1}  & ab            & a^{-1},
    \end{bmatrix}
\end{gather*}
where $a,b\in G$.
\end{corollary}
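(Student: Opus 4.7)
The plan is to combine the two immediately preceding results: Proposition~\ref{prp:normalizing} lets us assume the center entry is the identity, and Theorem~\ref{thm:abelianparameters} gives the general three-parameter form of a $3$-magic square in an abelian group. Specializing the latter to the case where the center is $1$ will collapse it to the desired two-parameter form.

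For sufficiency, observe that the displayed array is by definition a $3$-magic square in $G$ (its rows, columns, and diagonals each have product $1$), provided the nine entries are distinct; so if such $a,b$ exist and yield nine distinct entries, $G$ is $3$-magic.

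For necessity, suppose $G$ is $3$-magic. By Proposition~\ref{prp:normalizing}, since $G$ is abelian we may choose a magic square with the identity in the $(2,2)$ position, that is, a centered normalized $3$-magic square. By Theorem~\ref{thm:abelianparameters}, this square has the form
\begin{gather*}
    \begin{bmatrix}
        ac     & a^{-1}b^{-1}c   & bc  \\
        a^{-1}bc     & c    & ab^{-1}c  \\
        b^{-1}c    & abc    & a^{-1}c
    \end{bmatrix},
\end{gather*}
where $c = g_5$ is the center entry. Since the square is centered normalized, $c = 1$, and the array simplifies to exactly the form claimed in the corollary.

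The only subtle point is distinctness of the nine entries, but this is inherited automatically from the original magic square guaranteed by Proposition~\ref{prp:normalizing}, since that proposition was proved by translating a magic square through left multiplication by an automorphism of $G$, which preserves distinctness. Thus there is nothing nontrivial left to verify, and the main obstacle is simply invoking the prior two results in the correct order.
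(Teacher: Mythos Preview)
Your proposal is correct and follows essentially the same approach as the paper. The paper's proof is simply the one-liner ``multiply every entry of the three-parameter form by $c^{-1}$,'' which is exactly what you accomplish by first invoking Proposition~\ref{prp:normalizing} to force the center to be $1$ and then reading off $c=g_5=1$ from Theorem~\ref{thm:abelianparameters}; your version is just more explicit about which prior results are being used.
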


\begin{proof}
    This result follows from multiplying every entry in 
    \begin{gather*}
        \begin{bmatrix}
        ac     & a^{-1}b^{-1}c   & bc  \\
        a^{-1}bc     & c    & ab^{-1}c  \\
        b^{-1}c    & abc    & a^{-1}c
    \end{bmatrix}
    \end{gather*}
    by $c^{-1}$.
\end{proof}

Hence, for an abelian group $G$ to be $3$-magic, there must be two elements $a,b\in G$ such that $a$, $a^{-1}b^{-1}$, $b$, $a^{-1}b$, $ab^{-1}$, $b^{-1}$, $ab$, and $a^{-1}$ are distinct. This proves to be especially useful in showing the impossibility of having a $3$-magic square in certain classes of groups.

\begin{corollary}[abelianparameters2]
    The elements of any normalized $3$-magic square in an abelian group can be generated by two elements of the group.
\end{corollary}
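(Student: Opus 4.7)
The plan is to invoke Theorem~\ref{thm:abelianparameters} directly and then do a one-line case analysis on which entry is the identity. By that theorem, any $3$-magic square in an abelian group has the form
\begin{gather*}
    \begin{bmatrix}
        ac          & a^{-1}b^{-1}c & bc        \\
        a^{-1}bc    & c             & ab^{-1}c  \\
        b^{-1}c     & abc           & a^{-1}c
    \end{bmatrix}
\end{gather*}
for some $a,b,c\in G$, so \emph{a priori} every entry lies in $\langle a,b,c\rangle$. The goal is to show that if the square is normalized, then $c$ is already expressible in terms of $a$ and $b$, collapsing three generators to two.

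The key observation is that every entry of the above matrix has the shape $a^{i}b^{j}c$ for some $i,j\in\{-1,0,1\}$. Thus, whichever entry is chosen to be the identity, the equation $a^{i}b^{j}c=1$ immediately yields $c=a^{-i}b^{-j}$, placing $c$ inside $\langle a,b\rangle$. In particular, if the center entry $c$ itself is the identity (the centered normalized case), then $c=1\in\langle a,b\rangle$ trivially; in the other eight cases one simply reads off an explicit expression for $c$ as a length-$\leq 2$ word in $a^{\pm 1}$ and $b^{\pm 1}$.

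Having verified this in each of the nine positions, I would conclude that every entry of the square lies in $\langle a,b\rangle$, so the square is generated by the two elements $a$ and $b$, completing the proof. There is no real obstacle here; the work has already been done in Theorem~\ref{thm:abelianparameters}, and this corollary is essentially a bookkeeping observation about the uniform shape $a^{i}b^{j}c$ of the entries. The only thing to be a little careful about is the statement ``any normalized magic square'': one must remember that Theorem~\ref{thm:abelianparameters} says \emph{every} such square admits the three-parameter description, so the assumption that one entry equals $1$ really does force a relation among $a,b,c$ rather than merely supplying one realization.
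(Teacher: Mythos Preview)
Your proposal is correct and follows essentially the same route as the paper: invoke the three-parameter form from Theorem~\ref{thm:abelianparameters}, observe that normalization forces one entry to equal $1$, and solve the resulting relation for $c$ in terms of $a$ and $b$. Your explicit remark that every entry has the uniform shape $a^{i}b^{j}c$ with $i,j\in\{-1,0,1\}$ is a nice way to handle all nine cases at once, whereas the paper simply asserts that the equation can be solved for $c$ without spelling this out.
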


\begin{proof}
    From the previous proposition, every $3$-magic square in an abelian group is of the form
    \begin{gather*}
        \begin{bmatrix}
        ac     & a^{-1}b^{-1}c   & bc  \\
        a^{-1}bc     & c    & ab^{-1}c  \\
        b^{-1}c    & abc    & a^{-1}c
        \end{bmatrix}.
    \end{gather*}
    If this were normalized, then one of the entries would be $1$. This would give us an equation in which we could solve for $c$ in terms of $a$ and $b$.
\end{proof}

We can now make use of these results to continue determining which abelian groups are $3$-magic.

\begin{proposition}[cyclicgroups4]
The group $C_{n}^{k}$, with $k\geq4$, is $3$-magic if and only if $n\geq3$.
\end{proposition}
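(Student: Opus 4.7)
The plan is to split the biconditional into necessity (if $C_n^k$ is $3$-magic then $n \geq 3$) and sufficiency (if $n \geq 3$ then $C_n^k$ is $3$-magic), and to dispatch sufficiency by invoking earlier results, leaving the real work in the small-$n$ cases of necessity.

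For sufficiency, I would simply observe that $C_n^2 \leq C_n^k$ for any $k \geq 2$, so in particular for $k \geq 4$. Since Proposition~\ref{prp:cyclicgroups23} establishes that $C_n^2$ is $3$-magic whenever $n \geq 3$, Proposition~\ref{prp:subgroup} immediately upgrades this to $C_n^k$ for all $k \geq 4$. No new computation is required here.

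For necessity, the case $n = 1$ is ruled out by Proposition~\ref{prp:minimumorder} since $|C_1^k| = 1 < 9$. The only substantive case is $n = 2$. Here Proposition~\ref{prp:minimumorder} is not strong enough (for $k \geq 4$ the order $2^k \geq 16$), so I would use the structural result Corollary~\ref{cor:abelianparameters} instead: any $3$-magic square in the abelian group $C_2^k$ must, after normalization, look like
\begin{gather*}
\begin{bmatrix}
a & a^{-1}b^{-1} & b \\
a^{-1}b & 1 & ab^{-1} \\
b^{-1} & ab & a^{-1}
\end{bmatrix}
\end{gather*}
for some $a, b \in C_2^k$. Because every element of $C_2^k$ is its own inverse, the entries $a^{-1}b^{-1}$, $a^{-1}b$, $ab^{-1}$, and $ab$ all collapse to the single element $ab$, so at least four entries of the square coincide. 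This contradicts distinctness, so $C_2^k$ is not $3$-magic.

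I do not anticipate any real obstacle here: the sufficiency direction is a direct subgroup argument, and the necessity direction for $n = 2$ is a one-line consequence of Corollary~\ref{cor:abelianparameters} once one notices that inversion is trivial in an elementary abelian $2$-group. The only thing worth being careful about is making sure the necessity argument for $n = 2$ really uses the normalized form rather than a crude order bound, since the order bound from Proposition~\ref{prp:minimumorder} alone does not exclude $C_2^k$ for $k \geq 4$.
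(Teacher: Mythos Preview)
Your proposal is correct and follows essentially the same approach as the paper: sufficiency via an embedded $C_n^m$ and Proposition~\ref{prp:subgroup}, necessity for $n=2$ via Corollary~\ref{cor:abelianparameters} and the observation that every element of $C_2^k$ is an involution. The only cosmetic differences are that the paper cites $C_n^3\leq C_n^k$ rather than $C_n^2$, leaves the trivial $n=1$ case implicit, and exhibits the coincidence $a=a^{-1}$ instead of your $a^{-1}b^{-1}=a^{-1}b=ab^{-1}=ab$.
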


\begin{proof}
Since $C_{n}^{3}\leq C_{n}^{k}$, Proposition~\ref{prp:subgroup} gives sufficiency for $n\geq3$. Now let $n=2$. Assume there is a centered normalized $3$-magic square in $C_{n}^{3}\leq C_{n}^{k}$ given Corollary~\ref{cor:abelianparameters}, and use the notation in that corollary for its form. Since every nontrivial element in $C_{2}^{k}$ is an involution, $a=a^{-1}$, which violates distinctness. Hence, $C_{2}^{k}$ is not $3$-magic for any $k$.
\end{proof}

\begin{proposition}[cyclicgroupsncubedn]
The group $C_{n}\times C_{n^k}$, with $k\geq3$, is $3$-magic if and only if $n\geq2$.
\end{proposition}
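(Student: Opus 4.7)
The plan is to handle necessity via the order bound and then split sufficiency into two cases, $n \geq 3$ and $n = 2$. Necessity follows from Proposition~\ref{prp:minimumorder}: for $n = 1$ the group is trivial and cannot accommodate $9$ distinct entries.

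For sufficiency with $n \geq 3$, I would reuse the trick from the proof of Proposition~\ref{prp:cyclicgroupsnsquaredn}. Since $k \geq 3 \geq 2$ gives $n^2 \mid n^k$, we have $C_n \times C_{n^2} \leq C_n \times C_{n^k}$, and Proposition~\ref{prp:cyclicgroupsnsquaredn} guarantees that $C_n \times C_{n^2}$ is $3$-magic whenever $n \geq 3$; Proposition~\ref{prp:subgroup} then transfers this to the ambient group.

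The case $n = 2$ is the main obstacle, since $C_2 \times C_4$ has only $8$ elements and so no earlier result delivers a $3$-magic subgroup automatically. My strategy is first to reduce to $k = 3$: because $C_2 \times C_8 \leq C_2 \times C_{2^k}$ for all $k \geq 3$, Proposition~\ref{prp:subgroup} lets me assume $k = 3$. It then suffices to exhibit a $3$-magic square in $C_2 \times C_8$. Using Corollary~\ref{cor:abelianparameters}, I would write the group additively as $\Z/2 \oplus \Z/8$ and try $a = (0,1)$, $b = (1,2)$; a direct computation produces the nine entries $(0,1)$, $(1,5)$, $(1,2)$, $(1,1)$, $(0,0)$, $(1,7)$, $(1,6)$, $(1,3)$, $(0,7)$, which are pairwise distinct, while the magic structure comes for free from the template of Corollary~\ref{cor:abelianparameters}. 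The only subtlety in choosing $(a,b)$ is that $C_2 \times C_8$ contains three involutions, so one must arrange for each of $a$, $b$, $a+b$, $a-b$ to have order greater than $2$ in order for all eight outer entries to be distinct from their negatives; this is a short finite check and is where the real content of the proposition resides.
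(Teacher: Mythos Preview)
Your proof is correct and follows essentially the same architecture as the paper's: necessity via the order bound, sufficiency for $n\geq 3$ by embedding a known $3$-magic subgroup (you use $C_n\times C_{n^2}$ via Proposition~\ref{prp:cyclicgroupsnsquaredn}, the paper uses $C_{n^3}$ via Proposition~\ref{prp:cyclicgroups}), and the $n=2$ case by reducing to $k=3$ and exhibiting an explicit square in $C_2\times C_8$. Your pair $a=(0,1),\,b=(1,2)$ in $\Z/2\oplus\Z/8$ is a different but equally valid choice from the paper's, and your invocation of Corollary~\ref{cor:abelianparameters} makes the verification of the magic property automatic.
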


\begin{proof}
The fact that $C_{n}\times C_{n^3}$ is $3$-magic for $n\geq3$ follows from Proposition~\ref{prp:subgroup} after recognizing that $C_{n^3}\leq C_{n}\times C_{n^3}$.

For $C_{2^3}\times C_{2}=\lrc{x,y}$, we have that
\begin{gather*}
    \begin{bmatrix}
        xy      & x^5   & x^2y  \\
        x       & 1     & x^7   \\
        x^6y    & x^3   & x^7y
    \end{bmatrix}
\end{gather*}
forms a magic square with a common product of $1$. For $n=2$ and $k\geq4$, we can then use Proposition~\ref{prp:subgroup} in a similar manner as was previously done.
\end{proof}

The following proposition completes the search for abelian $3$-magic groups of odd order and leaves us only with those of even order.

\begin{proposition}[oddorder]
All abelian groups of odd order $n$ with $n\geq9$ are $3$-magic.
\end{proposition}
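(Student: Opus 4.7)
The plan is to use the Fundamental Theorem of Finitely Generated Abelian Groups together with the primary decomposition to reduce the claim to the propositions already proved in the paper, with a dichotomy on whether $G$ is cyclic.

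First I would dispose of the cyclic case: if $G \cong C_n$ with $n$ odd and $n \geq 9$, then Proposition~\ref{prp:cyclicgroups} directly gives that $G$ is $3$-magic.

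Next I would handle the non-cyclic case by locating an elementary abelian subgroup of rank $2$. Writing $G$ as the direct product of its Sylow subgroups, if every Sylow subgroup were cyclic then $G$ itself would be cyclic (as a product of cyclic groups of pairwise coprime orders), contradicting our case assumption. Hence some Sylow $p$-subgroup $G_p$ is non-cyclic, and because $|G|$ is odd we automatically have $p \geq 3$. A non-cyclic abelian $p$-group has $p$-rank at least $2$, so it contains a subgroup isomorphic to $C_p \times C_p = C_p^2$. By Proposition~\ref{prp:cyclicgroups23}, $C_p^2$ is $3$-magic since $p \geq 3$, and then Proposition~\ref{prp:subgroup} promotes $3$-magicness up the inclusion $C_p^2 \leq G_p \leq G$.

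There is no serious obstacle here; the argument is essentially bookkeeping on the structure theorem, and the main thing to be careful about is ensuring that when $G$ is non-cyclic the odd-order hypothesis forces the witnessing prime to satisfy $p \geq 3$, so that Proposition~\ref{prp:cyclicgroups23} (which requires $n \geq 3$) genuinely applies. The only other subtlety worth flagging is that the hypothesis $n \geq 9$ is used exclusively in the cyclic subcase, whereas the non-cyclic subcase works for any odd order with a repeated prime factor (and in fact any such $G$ automatically has order at least $9$, since the smallest non-cyclic abelian group of odd order is $C_3^2$).
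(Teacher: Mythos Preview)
Your proof is correct and follows essentially the same strategy as the paper's: split into the cyclic case (handled by Proposition~\ref{prp:cyclicgroups}) and the non-cyclic case, where one locates an elementary abelian $p$-subgroup with $p$ odd and applies Proposition~\ref{prp:subgroup}. The paper phrases the non-cyclic step via the invariant-factor rank $\ell$, finding $C_p^{\ell}\leq G$ and invoking Propositions~\ref{prp:cyclicgroups23} and~\ref{prp:cyclicgroups4}, whereas you go through the Sylow decomposition and need only $C_p^{2}$ and Proposition~\ref{prp:cyclicgroups23}; this is marginally more economical but not a substantively different argument.
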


\begin{proof}
Let $G$ be a finite abelian group of odd order with $|G|=n\geq9$. By the Fundamental Theorem of Finite Abelian Groups, we know that $G=C_{n_1}\times C_{n_2}\times\ldots\times C_{n_{\ell}}$, where $\ell$ is the rank of $G$. If $\ell=1$, then $G$ is $3$-magic by Proposition~\ref{prp:cyclicgroups}. Suppose then that $\ell>1$. Since $\ell$ is the rank of $G$, by the Chinese Remainder Theorem, there must be a $p|n_i$ for $i\in\{1,2,\ldots,\ell\}$ where $p$ is an odd prime. Thus $C_{p}^{\ell}\leq G$ and since $p\geq3$, $G$ is $3$-magic by Propositions~\ref{prp:subgroup}, \ref{prp:cyclicgroups23}, and \ref{prp:cyclicgroups4}.
\end{proof}

We now turn our attention to abelian groups of even order.

\begin{proposition}[2k4]
The group 
$C_{2}^{k}\times C_4$ is not $3$-magic for any $k$.
\end{proposition}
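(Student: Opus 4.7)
The plan is to use Corollary~\ref{cor:abelianparameters} to reduce the question to the existence of a single pair $a, b \in C_2^k \times C_4$ making the nine square entries distinct, and then to derive a contradiction by tracking the $C_4$-component.

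First, I would suppose for contradiction that $C_2^k \times C_4$ is $3$-magic and invoke Corollary~\ref{cor:abelianparameters} to obtain $a, b$ for which the nine elements $1, a, a^{-1}, b, b^{-1}, ab, a^{-1}b^{-1}, ab^{-1}, a^{-1}b$ are all distinct. The conditions $a \neq a^{-1}$ and $b \neq b^{-1}$ rule out $a$ and $b$ having order dividing $2$, so each of $a$ and $b$ must have order exactly $4$, since the exponent of $C_2^k \times C_4$ is $4$.

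Second, writing $C_2^k \times C_4 = C_2^k \times \langle y \rangle$, I would observe that an element $(u, y^i)$ has order $4$ if and only if $i \in \{1, 3\}$. Hence the $C_4$-exponents of $a$ and $b$ are both odd, so their sum is even, and the $C_4$-component of $ab$ lies in $\{1, y^2\}$, which has order dividing $2$. Since the $C_2^k$-component of any element also squares to the identity, this forces $(ab)^2 = 1$, equivalently $ab = (ab)^{-1} = a^{-1}b^{-1}$, contradicting the distinctness of $ab$ and $a^{-1}b^{-1}$.

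The argument is short and presents no serious obstacle beyond identifying the correct invariant to track: distinctness of the pairs $\{a, a^{-1}\}$ and $\{b, b^{-1}\}$ forces both $a$ and $b$ to have the maximal available order in $C_2^k \times C_4$, while the element $ab$ is then necessarily trapped among the elements of order at most $2$, which immediately collides with $a^{-1}b^{-1}$.
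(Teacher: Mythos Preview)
Your proof is correct and follows essentially the same approach as the paper's: both invoke Corollary~\ref{cor:abelianparameters}, deduce that $a$ and $b$ must have order~$4$ and hence odd $C_4$-exponent, and then derive a collision among the square's entries. Your version is in fact slightly cleaner, since observing that odd\,$+$\,odd is even gives $(ab)^2=1$ and hence $ab=a^{-1}b^{-1}$ directly, whereas the paper splits into the cases $\alpha=\beta$ and $\alpha\neq\beta$ and exhibits a different collision in each.
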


\begin{proof}
Let $C_{2}^{k}\times C_4=<x_1,x_2,\ldots,x_k,y>$, where $|x_i|=2$ (that is, the order of $x_i$ is 2) for all $i$ and $|y|=4$. Assume there is a centered normalized $3$-magic square in $C_{2}^{k}\times C_4$ by Corollary~\ref{cor:abelianparameters}, and use the notation in that corollary for its form. Since $a$ and $b$ cannot be involutions, $|a|=|b|=4$. Then $a=X_1y^{\alpha}$ and $b=X_2y^{\beta}$, where $X_1=x_{i_1}x_{i_2}\ldots x_{i_l}$, $X_2=x_{j_1}x_{j_2}\ldots x_{j_m}$, and $\alpha,\beta\in\{1,3\}$.

\textbf{Case 1 ($\alpha=\beta$):}

Without loss of generality, we may assume that $\alpha=\beta=1$. Then $a^{-1}b=(X_1y^3)(X_2y)=X_1X_2=(X_1y)(X_2y^3)=ab^{-1}$, contradicting distinctness.

\textbf{Case 2 ($\alpha\neq\beta$):} 

Without loss of generality, we may assume that $\alpha=1$ and $\beta=3$. Then $a^{-1}b^{-1}=(X_1y^3)(X_2y)=X_1X_2=(X_1y)(X_2y^3)=ab$, contradicting distinctness.
\end{proof}

In a very similar manner as was previously done, we can determine another family of groups that can be omitted from the search.

\begin{proposition}[2k3]
The group $C_{2}^{k}\times C_3$ is not $3$-magic for any $k$.
\end{proposition}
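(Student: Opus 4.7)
The plan is to mirror the proof of Proposition~\ref{prp:2k4} almost verbatim, replacing the prime-$4$ structure with the prime-$3$ structure. Write $C_2^k\times C_3=\langle x_1,x_2,\ldots,x_k,y\rangle$ with $|x_i|=2$ for each $i$ and $|y|=3$, so that every element has the form $Xy^{\gamma}$ where $X$ is a product of distinct $x_i$'s and $\gamma\in\{0,1,2\}$. By Corollary~\ref{cor:abelianparameters}, I would assume toward a contradiction that there is a centered normalized $3$-magic square in the group, using the parameters $a$ and $b$ from that corollary.

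Since the eight non-center entries must be distinct from the identity and from one another, neither $a$ nor $b$ can be the identity nor coincide with its own inverse. This forces both $a$ and $b$ to have nontrivial $C_3$-component, for otherwise they would land in the elementary abelian $2$-subgroup and be involutions. So I would write $a=X_1 y^{\alpha}$ and $b=X_2 y^{\beta}$ with $X_1, X_2$ products of the $x_i$'s and $\alpha,\beta\in\{1,2\}$, and then split into two cases in analogy with the previous proposition.

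In Case~1 ($\alpha=\beta$), the $C_3$-exponents of $a^{-1}b$ and $ab^{-1}$ both vanish modulo $3$, and since $X_1^{-1}=X_1$ and $X_2^{-1}=X_2$ in the elementary abelian $2$-part, both of these entries collapse to $X_1 X_2$, contradicting distinctness. In Case~2 ($\alpha\neq\beta$), the key observation is that $\alpha+\beta\equiv 0\pmod{3}$, so this time it is the entries $a^{-1}b^{-1}$ and $ab$ that collapse to the common value $X_1 X_2$, again violating distinctness. The argument rests entirely on the fact that $\{1,2\}$ are additive inverses modulo $3$, exactly parallel to the role of $\{1,3\}$ modulo $4$ in Proposition~\ref{prp:2k4}, so I anticipate no real obstacle beyond the routine verification of the four relevant products.
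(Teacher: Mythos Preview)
Your proof is correct. Both you and the paper reduce to Corollary~\ref{cor:abelianparameters} and then show that no admissible pair $a,b$ exists by a case analysis on the cyclic exponents, so the overall strategy is the same. The difference is in the decomposition: the paper absorbs one copy of $C_2$ into the odd factor and works in $C_2^{k-1}\times C_6$ with $|y|=6$, which forces $\alpha,\beta\in\{1,2,4,5\}$ and leads to six cases (plus a separate $k=1$ base case handled by the order bound). By keeping the $C_3$ factor isolated with $|y|=3$, you reduce the exponent set to $\{1,2\}$, and since these are additive inverses modulo~$3$ the analysis collapses to two cases that exactly mirror those of Proposition~\ref{prp:2k4}. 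Your choice of decomposition buys a noticeably shorter argument with no loss of generality, and it also covers small $k$ uniformly without appealing to Proposition~\ref{prp:minimumorder}.
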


\begin{proof}
This is true for $k=1$ since $|C_6|<9$. Let $k\geq2$, and let $C_{2}^{k}\times C_3\cong C_{2}^{k-1}\times C_6=<x_1,x_2,\ldots,x_{k-1},y>$, where $|x_i|=2$ for all $i$ and $|y|=6$. Assume there is a centered normalized $3$-magic square in $C_{2}^{k-1}\times C_6$ by Corollary~\ref{cor:abelianparameters}, and use the notation used in that corollary for its form. Since $a$ and $b$ cannot be involutions, $|a|,|b|\in\{3,6\}$. Then $a=X_1y^{\alpha}$ and $b=X_2y^{\beta}$, where  $X_1=x_{i_1}x_{i_2}\ldots x_{i_l}$, $X_2=x_{j_1}x_{j_2}\ldots x_{j_m}$, and $\alpha,\beta\in\{1,2,4,5\}$.

\textbf{Case 1 ($\alpha=\beta$):} 

Then $a^{-1}b=(X_1y^{-\alpha})(X_2y^{\alpha})=X_1X_2=(X_1y^{\alpha})(X_2y^{-\alpha})=ab^{-1}$, contradicting distinctness.

\textbf{Case 2 ($\beta\equiv_6-\alpha$):} 

Then $a^{-1}b^{-1}=(X_1y^{-\alpha})(X_2y^{\beta})=X_1X_2=(X_1y^{\alpha})(X_2y^{-\alpha})=ab$, contradicting distinctness.

\textbf{Case 3 ($\{\alpha,\beta\}=\{1,2\}$):}

Without loss of generality, we may assume $\alpha=1$ and $\beta=2$. Then $a^{-1}b^{-1}=(X_1y^5)(X_2y^4)=X_1y^3X_2=(X_1y)(X_2y^2)=ab$, contradicting distinctness.

\textbf{Case 4 ($\{\alpha,\beta\}=\{1,4\}$):}

Without loss of generality, we may assume $\alpha=1$ and $\beta=4$. Then $a^{-1}b=(X_1y^5)(X_2y^4)=X_1y^3X_2=(X_1y)(X_2y^2)=ab^{-1}$, contradicting distinctness.

\textbf{Case 5 ($\{\alpha,\beta\}=\{2,5\}$):}

Without loss of generality, we may assume $\alpha=2$ and $\beta=5$. Then $a^{-1}b=(X_1y^4)(X_2y^5)=X_1y^3X_2=(X_1y^2)(X_2y)=ab^{-1}$, contradicting distinctness.

\textbf{Case 6 ($\{\alpha,\beta\}=\{4,5\}$):}

Without loss of generality, we may assume $\alpha=4$ and $\beta=5$. Then $a^{-1}b^{-1}=(X_1y^2)(X_2y)=X_1y^3X_2=(X_1y^4)(X_2y^5)=ab$, contradicting distinctness.
\end{proof}

The next proposition may seem less impactful than the previous ones, but its usefulness will be seen in the next section. It turns out knowing whether $C_4\times C_8$ is $3$-magic or not is an exceptional case in the proving of the characterization theorem; the next proposition handles this case.

\begin{proposition}[c4c8]
The group $C_4\times C_8$ is $3$-magic.
\end{proposition}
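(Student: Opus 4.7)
The plan is to exhibit an explicit centered normalized $3$-magic square, invoking Corollary~\ref{cor:abelianparameters} so that the task reduces to finding $a,b\in C_4\times C_8$ for which the eight elements $a$, $a^{-1}$, $b$, $b^{-1}$, $ab$, $a^{-1}b^{-1}$, $ab^{-1}$, $a^{-1}b$ are distinct and all nontrivial. Once such $a,b$ are in hand, plugging them into the matrix from the corollary automatically produces a magic square (with magic product $1$), because the row, column, and diagonal products are all determined algebraically.

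I would write $C_4\times C_8=\langle x,y\rangle$ with $|x|=4$ and $|y|=8$, and take the simplest possible candidates, namely $a=x$ and $b=y$. The heart of the argument is just distinctness, which I would verify by identifying each element with its exponent pair $(i,j)\in\{0,1,2,3\}\times\{0,1,\dots,7\}$. With $a=x$ and $b=y$, the nine entries of the matrix correspond to the pairs
\begin{gather*}
    (1,0),\ (3,7),\ (0,1),\ (3,1),\ (0,0),\ (1,7),\ (0,7),\ (1,1),\ (3,0),
\end{gather*}
which are manifestly distinct, so the resulting array is a valid magic square. The failure modes identified in Propositions~\ref{prp:2k4} and \ref{prp:2k3}, where one of $a,b$ is forced to be an involution or where collisions like $a^{-1}b=ab^{-1}$ occur, are avoided here because $a$ has order $4$ and $b$ has order $8$, and $a$ and $b$ lie in distinct cyclic factors.

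There is no real obstacle in this proof; the only thing to keep track of is bookkeeping of the nine exponent pairs. The step that might at first look delicate is ruling out coincidences such as $a^{-1}b=ab^{-1}$ (which would force $a^2=b^2$) or $a^{-1}b^{-1}=ab$ (which would force $a^2 b^2=1$), but with $a=x$ and $b=y$ these reduce to impossible relations between independent generators of the two factors.
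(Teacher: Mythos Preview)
Your proof is correct and follows essentially the same approach as the paper: exhibit an explicit centered normalized magic square of the form in Corollary~\ref{cor:abelianparameters} and verify that the nine entries are distinct. The paper's displayed square corresponds to the choice $a=y$, $b=xy$, whereas you take the even simpler $a=x$, $b=y$; both work, and your exponent-pair bookkeeping makes the distinctness check transparent.
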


\begin{proof}
Let $C_4\times C_8=\lrc{x,y}$, where $|x|=4$ and $|y|=8$. Then
\begin{gather*}
    \begin{bmatrix}
        y       & x^3y^6    & xy  \\
        x       & 1         & x^3  \\
        x^3y^7  & xy^2      & y^7
    \end{bmatrix}
\end{gather*}
is a magic square with common product $1$.
\end{proof}

\section{A Characterization Theorem}

We now present the characterization of finitely generated abelian $3$-magic groups, which is built upon many of the previous results of narrower scope.

\begin{theorem}[abeliangroups]
\textbf{(Characterization of the of Finitely Generated Abelian $3$-magic Groups)}

Let $G$ be a finitely generated abelian group. If $G$ is infinite, then it is $3$-magic. If $|G|=n$, then we have the following:
\begin{enumerate}
    \item If $n$ is odd, we know $G$ is $3$-magic if and only if $n\geq9$.
    \item If $n$ is even, we can write its Sylow-2-subgroup in the form $C_{2}^{\alpha_1}\times C_{2^2}^{\alpha_2}\times C_{2^3}^{\alpha_3}\times\ldots\times C_{2^l}^{\alpha_l}$, which lets us say that:
    \begin{enumerate}
        \item If $G$ is a $2$-group, then 
        \begin{enumerate}
            \item if $\alpha_i\neq0$ for some $i\geq4$, we have $G$ is $3$-magic, else
            \item if $\alpha_2\geq2$ or if $\alpha_3\geq2$, we have $G$ is $3$-magic, or else
            \item $G$ is $3$-magic if and only if ($\alpha_1\neq0$ or $\alpha_2=1$) and $\alpha_3=1$.
        \end{enumerate}
        \item If $G$ is not a $2$-group, then 
        \begin{enumerate}
            \item if $p|n$, where $p\geq5$, we have that $G$ is $3$-magic or else
            \item if there is no prime $p\geq5$ such that $p|n$, then
            \begin{enumerate}
                \item if $\alpha_i\neq0$ for some $i\geq2$, we know $G$ is $3$ magic or else
                \item $G$ is $3$-magic if and only if $9|n$.
            \end{enumerate}
        \end{enumerate}
    \end{enumerate}
\end{enumerate}
\end{theorem}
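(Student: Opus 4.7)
The plan is to reduce each branch of the theorem to previously proved results, handling the easy cases first. The infinite case is exactly Proposition~\ref{prp:infinitegroup}. For finite $G$ with $|G|=n$, I split on the parity of $n$: when $n$ is odd, necessity that $n\geq 9$ is Proposition~\ref{prp:minimumorder}, and sufficiency is Proposition~\ref{prp:oddorder}. When $n$ is even I further split on whether $G$ is a $2$-group.

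For the non-$2$-group branch, if some prime $p\geq 5$ divides $n$, then since $2\mid n$ as well and $G$ is abelian, $G$ contains a cyclic subgroup of order $2p\geq 10$, so Propositions~\ref{prp:subgroup} and~\ref{prp:cyclicgroups} finish the case. Otherwise the only primes dividing $n$ are $2$ and $3$, with $3\mid n$ since $G$ is not a $2$-group. If some $\alpha_i\neq 0$ for $i\geq 2$, then $C_4\leq G$, so $C_{12}\leq G$ is $3$-magic by Proposition~\ref{prp:cyclicgroups}. Otherwise the Sylow-$2$-subgroup is $C_2^{\alpha_1}$: if $9\mid n$, the Sylow-$3$-subgroup contains either $C_9$ or $C_3^2$, both $3$-magic by Propositions~\ref{prp:cyclicgroups} and~\ref{prp:cyclicgroups23}; whereas if $9\nmid n$, the Sylow-$3$-subgroup is $C_3$ and $G\cong C_2^{\alpha_1}\times C_3$ is not $3$-magic by Proposition~\ref{prp:2k3}.

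For the $2$-group branch, subcase (i) follows from $C_{16}\leq G$ and Proposition~\ref{prp:cyclicgroups}, and subcase (ii) follows from $C_4^2\leq G$ or $C_8^2\leq G$ together with Proposition~\ref{prp:cyclicgroups23}. In subcase (iii) the group is $G=C_2^{\alpha_1}\times C_4^{\alpha_2}\times C_8^{\alpha_3}$ with $\alpha_2,\alpha_3\in\{0,1\}$. Sufficiency: when $\alpha_3=1$ and $\alpha_1\neq 0$, the subgroup $C_2\times C_8\leq G$ is $3$-magic by Proposition~\ref{prp:cyclicgroupsncubedn}, and when $\alpha_3=1$ and $\alpha_2=1$, the subgroup $C_4\times C_8\leq G$ is $3$-magic by Proposition~\ref{prp:c4c8}. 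Necessity: if $\alpha_3=0$ then $G$ is either $C_2^{\alpha_1}$, in which every nonidentity element is its own inverse so the parameterization of Corollary~\ref{cor:abelianparameters} forces $a=a^{-1}$ and violates distinctness, or $C_2^{\alpha_1}\times C_4$, ruled out by Proposition~\ref{prp:2k4}; and if $\alpha_3=1$ with $\alpha_1=\alpha_2=0$, then $G=C_8$ is too small by Proposition~\ref{prp:minimumorder}.

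The main obstacle is keeping the case analysis in subcase (iii) of the $2$-group branch straight: each of the at most eight triples $(\alpha_1,\alpha_2,\alpha_3)$ must be matched to the correct supporting proposition, and sufficiency and necessity hinge on the nontrivial constructions in Propositions~\ref{prp:c4c8} and~\ref{prp:cyclicgroupsncubedn} together with the obstruction argument in Proposition~\ref{prp:2k4}. Once those ingredients are in hand, the remainder is pure bookkeeping.
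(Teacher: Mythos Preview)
Your proof is correct and follows essentially the same case analysis as the paper, citing the same supporting propositions at each branch. In fact you are slightly more thorough in subcase~(iii): you explicitly dispose of the triple $\alpha_1=\alpha_2=0$, $\alpha_3=1$ (i.e.\ $G=C_8$) via Proposition~\ref{prp:minimumorder}, a case the paper's proof leaves implicit.
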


\begin{proof}
Let $G$ be a finitely generated abelian group. If $G$ is infinite then $G$ is $3$-magic by Proposition~\ref{prp:infinitegroup}, so let $G$ be an abelian group with $|G|=n$. Then we have:
\begin{enumerate}
    \item If $n$ is odd, then $G$ is $3$-magic if and only if $n\geq9$ by Proposition~\ref{prp:oddorder}
    \item If $n$ is even, write its Sylow-2-subgroup in the form $C_{2}^{\alpha_1}\times C_{2^2}^{\alpha_2}\times C_{2^3}^{\alpha_3}\times\ldots\times C_{2^l}^{\alpha_l}$.
    \begin{enumerate}
        \item If $G$ is a $2$-group, then
        \begin{enumerate}
            \item if $\alpha_{i_0}\neq0$ for some $i_0\geq4$, then $C_{2^{i_0}}\leq G$, which is $3$-magic by Proposition~\ref{prp:cyclicgroups}. 
            \item if $\alpha_{i}=0$ for all $i\geq4$, then we know that if $\alpha_2\geq2$ or $\alpha_3\geq2$, then $C_{4}^{2}\leq G$ or $C_{8}^{2}\leq G$, both of which are $3$-magic by Proposition~\ref{prp:cyclicgroups23}. Suppose then that $\alpha_{i}=0$ for all $i\geq4$, $\alpha_2\leq1$, and $\alpha_3\leq1$.
            \item Otherwise, we have the following:
            \begin{enumerate}
                \item if $\alpha_1\neq0$ and $\alpha_3=1$, we have $C_2\times C_8\leq G$, which is $3$-magic by Proposition~\ref{prp:cyclicgroupsncubedn}.
                \item if $\alpha_2=1$  and $\alpha_3=1$, then $C_4\times C_8\leq G$, which was shown to be $3$-magic in Proposition~\ref{prp:c4c8}. 
                \item if $\alpha_3=0$, then $G$ is not $3$-magic by Propositions~\ref{prp:cyclicgroups4} and \ref{prp:2k4}.
            \end{enumerate}
        \end{enumerate}
        \item If $G$ is not a $2$-group, then we can say that: 
        \begin{enumerate}
            \item if $p|n$, where $p\geq5$, then $C_{2p}\leq G$, which is $3$-magic by Proposition~\ref{prp:cyclicgroups}.
            \item if there is no prime $p\geq5$ such that $p|n$, then $3|n$ in order for $G$ to not be a $2$-group. It follows that:
            \begin{enumerate}
                \item if $\alpha_{i_0}\neq0$ for some $i_0\geq2$, then $C_{2^{i_0}(3)}\leq G$, which is $3$ magic by Proposition~\ref{prp:cyclicgroups}. 
                \item if $\alpha_i=0$ for all $i\geq2$, then if $9|n$, we have $C_9\leq G$ or $C_{3}^{2}\leq G$, both of which are $3$-magic by Proposition~\ref{prp:cyclicgroups} and \ref{prp:cyclicgroups23}, respectively, and if $9\nmid n$, then $G$ is not $3$-magic by Proposition~\ref{prp:2k3}.
            \end{enumerate}
        \end{enumerate}
    \end{enumerate}
\end{enumerate}
\end{proof}

\section{Nonabelian Groups}

We now briefly turn our attention to nonabelian groups and first present some sufficient conditions for $3$-magic groups.

\begin{proposition}[naprimedivisor]
Let $G$ be a group with $|G|=n$. If any of the following are true, then $G$ is $3$-magic.
\begin{enumerate}
    \item There is a prime $p\geq11$ such that $p|n$.
    \item There is a prime $p\neq2$ such that $p^2|n$.
\end{enumerate}
\end{proposition}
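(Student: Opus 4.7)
The plan is to show, in each case, that $G$ contains a $3$-magic subgroup, and then invoke Proposition~\ref{prp:subgroup}. Both cases essentially reduce to applying Cauchy's theorem (or Sylow's theorem) together with the classification of $3$-magic cyclic and small abelian groups from earlier propositions.

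For case (1), since $p\mid n$, Cauchy's theorem produces an element of order $p$ in $G$, hence a cyclic subgroup $C_p\leq G$. The hypothesis $p\geq 11$ forces $p\geq 9$ (in fact, $11$ is just the smallest prime $\geq 9$, which is why the bound in this statement is stated as $11$ rather than $9$), so Proposition~\ref{prp:cyclicgroups} gives that $C_p$ is $3$-magic. Proposition~\ref{prp:subgroup} then yields that $G$ is $3$-magic.

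For case (2), I would use the fact that since $p^2\mid n$, a Sylow $p$-subgroup $P$ of $G$ has order divisible by $p^2$, and any $p$-group contains subgroups of each prime-power order up to its own order. Hence $G$ has a subgroup $H$ with $|H|=p^2$. Because every group of order $p^2$ is abelian, $H$ is isomorphic to either $C_{p^2}$ or $C_p\times C_p$. Since $p\neq 2$ means $p\geq 3$, in the first case $|H|=p^2\geq 9$ so $H\cong C_{p^2}$ is $3$-magic by Proposition~\ref{prp:cyclicgroups}, while in the second case $H\cong C_p^2$ is $3$-magic by Proposition~\ref{prp:cyclicgroups23}. Either way, Proposition~\ref{prp:subgroup} completes the argument.

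There is essentially no serious obstacle here: the statement is designed to be a clean corollary of the earlier work, with Cauchy's theorem (for case (1)) and Sylow plus the $p^2$-order classification (for case (2)) as the only external inputs. The mildest subtlety is remembering to split case (2) along the two isomorphism types of groups of order $p^2$ so that the two different previously-proved propositions can be applied.
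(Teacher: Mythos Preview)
Your proposal is correct and follows essentially the same route as the paper: Cauchy's theorem for case~(1) to get $C_p\leq G$ with $p\geq 9$, and Sylow's theorem for case~(2) to get a subgroup of order $p^2$, which is then classified as $C_{p^2}$ or $C_p^2$ and handled by the earlier cyclic-group propositions. Your citation split in case~(2) (Proposition~\ref{prp:cyclicgroups} for $C_{p^2}$ versus Proposition~\ref{prp:cyclicgroups23} for $C_p^2$) is in fact slightly more precise than the paper's.
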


\begin{proof}
\begin{enumerate}
    \item By Cauchy's Theorem, $C_p\leq G$, which is $3$-magic by Propositon~\ref{prp:cyclicgroups}.
    \item As a consequence of the Sylow Theorems, $H\leq G$, where $|H|=p^2$. This means that $H\cong C_{p^2}$ or $H\cong C_{p}^{2}$, both of which are $3$-magic by Proposition~\ref{prp:cyclicgroups23}.
\end{enumerate}
\end{proof}

Next we present a nonabelian group that is $3$-magic but not covered by the previous proposition.

\begin{example}[c7c3]
When the homomorphism is acting faithfully, $C_7\rtimes C_3$ is $3$-magic.
\end{example}

\begin{proof}
When the homomorphism is acting faithfully, we can write $C_7\rtimes C_3=\lrc{a,b\mid a^7=b^3=1, bab^{-1}=a^4}$. Then
\begin{gather*}
    \begin{bmatrix}
        a       & ab        & a^3b^2  \\
        a^2b^2  & 1         & a^6b  \\
        a^2b    & a^5b^2    & a^6
    \end{bmatrix}
\end{gather*}
is a magic square with magic product $1$.
\end{proof}

%\section{Additional Results}

%\begin{definition}[MagicProduct]
%    A member of a group is an n-magic product if and only if it is the common product of an n-magic square of that group.
%\end{definition}

%\begin{corollary}[MagicSquareCubes]
%    Any 3-magic product in an abelian group must be a cube. 
%\end{corollary}

%\begin{example}[MagicProductCounterexample]
%    Note that this corollary is not a biconditional. Indeed, $x^3\in \lrc{x} \cong C_3$ is a cube yet $C_6$ is not $3$-magic as its order is less than $9$, and therefore $x^3$ cannot be a magic product in $C_6$.
%\end{example}

%\begin{example}[NthRootCounterExample]
%    The previous corollary may lead some to intuit that any $n$-magic product must have an nth root, but this does not necessarily hold.
    
%    The following 4x4 magic square in the integers has a magic sum of 34, which is not divisible by 4:

%    $\begin{bmatrix}
%        6   & 9   & 12  & 7  \\
%        3   & 16  & 13  & 2  \\
%        15  & 4   & 1   & 14 \\
%        10  & 5   & 8   & 11
%    \end{bmatrix}$
%\end{example}

%\begin{conjecture}[MagicSquareCubeMax]
%    A result analogous to Corollary~\ref{cor:MagicSquareCubes} will only hold for $n = 3$.
%\end{conjecture}

\section{Summary}

    In this paper, we defined a magic group by likening it to a magic square using the language of groups. We were then able to characterize the finitely generated abelian $3$-magic groups. Beyond this, we also discussed whether some nonabelian groups are $3$-magic. 

\section{Future Endeavors}

    There are multiple topics of study for future investigations. For example, we could explore ``magic cubes'' or ``magic tesseracts'' in groups, potentially as an extension of the concept briefly touched on in \cite{sun}. Another idea is to investigate whether various structures (e.g. groups, graphs, etc.) have 3-magic automorphism groups. Finally, in our future study, we would like to find groups that are $n$-magic for $n>3$.

\printbibliography

\end{document}